\newcommand{\R}{{\mathbb R}}
\numberwithin{equation}{section}
\newtheorem{theorem}{Theorem}[section]
\theoremstyle{remark}
\newtheorem{corollary}{Corollary}[section]
\theoremstyle{definition}
\journal{Statistics and Probability Letters}
\begin{document}
\begin{frontmatter}
\title{Standard maximum likelihood drift parameter estimator in the homogeneous
diffusion model is always strongly consistent\tnoteref{T1}}


\author{Yuliya Mishura}
\ead{myus@univ.kiev.ua}



\address{ Taras Shevchenko National University of Kyiv,
Mechanics and Mathematics Faculty, Volodymyrska 60, 01601 Kyiv,
Ukraine}

\begin{abstract}
We consider the homogeneous stochastic differential equation with
unknown parameter to be estimated.  We prove that the standard
maximum likelihood estimate is   strongly consistent under very
mild conditions.  There are also established the conditions for
strong consistency of the discretized estimator.
\end{abstract}

\begin{keyword}
Stochastic differential equation with homogeneous coefficients\sep
drift parameter\sep strong consistency\sep discretized model

\MSC[2010] 62F12 \sep 62M05 \sep 60H10 \sep 60J60
\end{keyword}

\end{frontmatter}

\section{Introduction}

There is an extended  literature devoted to standard and
nonstandard approaches to the drift parameter estimation in the
diffusion models, both for discrete and continuous observations.
We mention only the books \cite{lish}, \cite{prakasa rao},
\cite{hey}, \cite{soren} and references therein.  Many complicated
models have been studied.  However, there was a curious gap even
in the case of simplest homogeneous diffusion model:  there were
no conditions for the strong consistency of the standard maximum
likelihood estimator that are  close to be necessary and are
sufficiently mild. We have filled the gap, applying the results of
the paper \cite{miur} and have proved that, in some sense, the
standard maximum likelihood estimator is always strongly
consistent unless the drift coefficient is identically zero.

The paper is organized as follows.  In Section 2 we prove that the
denominator in the stochastic representation of the maximum
likelihood estimator tends to infinity under very mild conditions
and deduce from here the strong consistency. In Section 3 we
establish the sufficient conditions for the strong consistency of
the discretized version of the maximum likelihood estimator. Some
simulation results are included.

\section{Preliminaries}

Let $(\Omega,\Im,\{\Im_{t}\}_{t\geq 0},P)$ be a complete
probability space with filtration that satisfies the standard
conditions. Let $W=\{W_t,\Im_t,t\geq0\}$ be a standard Wiener
process. Consider a homogenous diffusion process $X=\{X_t,\Im_t,t\geq0\}$
that is a  solution to the stochastic differential equation

\begin{equation}\label{DiffEq}
X_{t}=x_{0}+\theta\int\limits_0^t a(X_{s})ds+\int\limits_0^t
b(X_s)dW_{s}.
\end{equation}
Here $x_{0}\in\mathbb{R}$; $\theta \in \mathbb{R}$ is  unknown
parameter  to be estimated, $a,b:\mathbb{R}\rightarrow\mathbb{R}$
are measurable  functions, $b(x)\neq0$ for any $x\in\mathbb{R}$, $a$ is not zero identically.
In general, we only need the existence and uniqueness of the weak
solution of equation \eqref{DiffEq} on the whole axis. Recall that
any of the  following groups of conditions  on $a$ and $b$
supplies the existence-uniqueness for the strong solution:

Yamada conditions (\cite{yamada}, \cite{ikeda-watan}):

\begin{itemize}
\item[](i) Linear growth: there exists $K>0$ such that for any $x\in\mathbb{R}$

$|a(x)|+|b(x)|\leq K(1+|x|)$;

\item[](ii) There exists such convex increasing function
$k:\mathbb{R}_+\rightarrow\mathbb{R}_+$ that
$k(0)=0$,
$\int_{0+}k^{-1}(u)du=+\infty$ and for any $x,y\in\mathbb{R}$
$|a(x)-a(y)|\leq k(|x-y|)$;

\item[](iii) There exists such strictly increasing function
$\rho:\mathbb{R}_+\rightarrow\mathbb{R}_+$ that $\rho(0)=0$,
$\int_{0+}\rho^{-2}(u)du=+\infty$ and for any $x,y\in\mathbb{R}$
$|b(x)-b(y)|\leq \rho(|x-y|)$.

\end{itemize}

Krylov--Zvonkin  conditions \cite{kryl zvon}:

\begin{itemize}
\item[](i) Function $a$ is bounded, function $b$ is separated from $0$: $b(x)\geq \alpha>0, x\in\mathbb{R}$;

\item[](ii) Function $b$ has locally bounded variation:  for any $N>0$ $$var_{[-N,N]}
b:=\sup_{-N=x_{0}<x_{1}<...<x_{n}=N}\sum|b(x_{k+1})-b(x_{k})|<\infty.$$
\end{itemize}

Existence  of the weak solution of equation \eqref{DiffEq}
holds under continuity and linear growth of the coefficients.
It was initially proved in \cite{skor}. Then the conditions
of existence and uniqueness  of the weak solution were generalized
in \cite{krylov} and the most general conditions were obtained in
\cite{engel schmidt} and \cite{engel schmidt2}.

\section{Strong consistency of the drift parameter maximum-likelihood estimator constructed for continuous observations}

Denote the functions $c(x)=\frac{a(x)}{b^{2}(x)}$,
$d(x)=\frac{a^2(x)}{b^{2}(x)}.$ In what follows we suppose that
the following condition holds:
$$(A)\;\;  \text{functions}\;\;  \frac{1}{b^{2}}, \;\; d\;\; \text{and}\;\;  \frac{d}{b^{2}}\;\;  \text{are locally integrable}.$$

 Furthermore, denote
$L_{t}(x)=b^{2}(x)\lim_{\varepsilon\downarrow\infty}\frac{1}{2\varepsilon}\int\limits_0^t1\{|X_{s}-x|<\varepsilon\}ds$
the local time of the process $X$ at the point $x$ on the interval
$[0,t]$, $t\geq0$. Then, according, e.g., to \cite{pitman-yor},
for any locally integrable function $f$ the following equality
holds:
$$\int\limits_0^t
f(X_{s})ds=\int_{\mathbb{R}}\frac{f(x)}{b^2(x)}L_{t}(x)dx, t\geq
0.$$ Therefore, under the condition of local integrability,
$\int\limits_0^td(X_{s})ds<\infty$ for any $t>0$. As it is
well-known, a likelihood function for  equation \eqref{DiffEq} has
a form

\begin{multline*}
\frac{dP_{\theta}(t)}{dP_{0}(t)}=\exp\bigg\{\theta\int\limits_0^t\frac{a(X_{s})}{b(X_{s})}dW_{s}+
\frac{\theta^{2}}{2}\int\limits_0^td(X_{s})ds\bigg\}=
\exp\bigg\{\theta\int\limits_0^tc(X_{s}) dX_{s}\\
-\frac{\theta^{2}}{2}\int\limits_0^td(X_{s}) ds\bigg\},
\end{multline*}
and the maximum likelihood estimator of parameter $\theta$
constructed by the observations of $X$ on the interval $[0,t]$,
has a form

\begin{equation}\label{LikEst}
\hat{\theta}_{t}=\frac{\int\limits_0^tc(X_{s})dX_{s}}{\int\limits_0^td(X_{s})ds}=
\theta+\frac{\int\limits_0^t\frac{a(X_{s})}{b(X_{s})}dW_{s}}{\int\limits_0^t d(X_{s})d{s}}.
\end{equation}
In order to establish the criteria of the strong consistency of
$\hat{\theta}_{t}$ in terms of the coefficients $a$ and $b$,
denote
$\varphi(x)=\exp\Big\{-2\theta\int\limits_0^xc(y)dy\Big\}$,
$\Phi(x)=\int\limits_0^x \varphi(y)dy$. Concerning the asymptotic behavior of the integral $\int\limits_0^t d(X_{s})d{s}$ under the fixed value of parameter $\theta\neq 0$,  two cases can
be considered.

Let for some $\theta \in \R$ $\Phi(+\infty)=-\Phi(-\infty)=+\infty$. In this case the diffusion
process $X$ is recurrent and its trajectories have the property:
$\overline{\lim}_{t\to\infty}X_{t}=+\infty$ a.s. and
$\underline{\lim}_{t\to\infty}X_{t}=-\infty$ a.s.

Furthermore, $\int\limits_0^\infty
f(X_{s})ds=\int_{\mathbb{R}}\frac{f(x)}{b^2(x)}L_{\infty}(x)dx$. However, as it
was mentioned in \cite{pitman-yor} and  \cite{itomackean}, $L_{\infty}(x)=\infty$ $P$-a.s. for any
$x\in\mathbb{R}$ and recurrent process $X$. It means that
$\int\limits_0^\infty f(X_{s})ds=\infty$ a.s. for any $f$ that is not identically $0$,
and in this case
\begin{equation}\label{Eqq}
\int\limits_0^\infty d(X_{s})ds=\infty \quad P-\text{a.s}.
\end{equation}

Now, let at least one of the integrals $\Phi(+\infty)$ or
$\Phi(-\infty)$ be finite. In this case the process $X$ is
transient. We shall apply the following result that is the
reformulation of Theorem 2.12 from  \cite{miur}.  Denote
$\psi_+(x)=\frac{\int\limits_x^{+\infty}\varphi(y)dy
}{\varphi(x)}$,
$\psi_-(x)=\frac{\int\limits_{-\infty}^x\varphi(y)dy
}{\varphi(x)}$, $J_\infty (f)=\int\limits_0^{+\infty} f(X_{s}) ds.$
\begin{theorem}\label{miur}(\cite{miur})\begin{itemize}

\item Let $\psi_+(0)<\infty, \psi_-(0)=\infty$.

If   $I_1(f):=\int\limits_0^{+\infty} \frac{|f(x)|}{a^2(x)}\psi_+(x)dx
<\infty $, then $J_\infty(f)\in \R $ $P$-a.s.

If $I_1(f)
=\infty $ then $J_\infty(f)=\infty $ $P$-a.s.
\item Let $\psi_+(x)= \infty, \psi_-(x)<\infty$.

If  $I_2(f):=\int\limits^0_{-\infty} \frac{|f(x)|}{a^2(x)}\psi_-(x)dx
<\infty $, then $J_\infty(f)\in \R $ $P$-a.s.

If $I_2(f)
=\infty $ then $J_\infty(f)=\infty $ $P$-a.s.
\item Let $\psi_+(x)< \infty, \psi_-(x)<\infty$.

If   $I_1(f)
<\infty $, then $J_\infty(f)\in \R $ $P$-a.s. on the set $X_s^x\rightarrow +\infty$.

If $I_1(f)
=\infty $ then $J_\infty(f)=\infty $ $P$-a.s. on the set $X_s^x\rightarrow +\infty$.

If $I_2(f)
<\infty $, then $J_\infty(f)\in \R $ $P$-a.s. on the set $X_s^x\rightarrow -\infty$.

If $I_2(f)
=\infty $ then $J_\infty(f)=\infty $ $P$-a.s. on the set $X_s^x\rightarrow -\infty$.
\end{itemize}
\end{theorem}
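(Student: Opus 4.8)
The plan is to recover the statement (which is Theorem~2.12 of \cite{miur}) by the classical scale/time-change reduction for one-dimensional diffusions. First I would pass to natural scale, setting $Y_t=\Phi(X_t)$. By It\^o's formula, using $\varphi'=-2\theta c\,\varphi$, the drift cancels, so $Y$ is a continuous local martingale, $dY_t=\varphi(X_t)b(X_t)\,dW_t$, living in the open interval with endpoints $\Phi(-\infty)$ and $\Phi(+\infty)$; here $\Phi(+\infty)<\infty\iff\psi_+(0)<\infty$ and $\Phi(-\infty)>-\infty\iff\psi_-(0)<\infty$, so each of the three cases of the theorem amounts to fixing which scale endpoints are finite. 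When $\psi_+(0)<\infty$ and $\psi_-(0)=\infty$, $Y$ is bounded above, hence $\Phi(+\infty)-Y_t$ is a nonnegative local martingale and converges a.s.\ to a finite limit; that limit cannot be an interior point, for otherwise $\langle Y\rangle_\infty=\int_0^\infty\varphi^2(X_s)b^2(X_s)\,ds=\infty$. So $Y_t\to\Phi(+\infty)$ and $X_t\to+\infty$ a.s.; the case $\psi_-(0)<\infty,\psi_+(0)=\infty$ is symmetric, and when both endpoints are finite the same argument gives $\Omega=\{X_t\to+\infty\}\cup\{X_t\to-\infty\}$ up to a null set, each event with positive probability.

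Next I would time-change by the Dambis--Dubins--Schwarz Brownian motion $B$ of $Y$: $Y_t=B_{\langle Y\rangle_t}$, with the clock $\langle Y\rangle$ strictly increasing since $\varphi b\neq0$. As $Y$ converges, $\langle Y\rangle_\infty<\infty$ a.s., and on $\{X_t\to+\infty\}$ the clock runs up exactly to the first hitting time $H_\beta$ of $\beta:=\Phi(+\infty)$ by $B$ (ahead of hitting $\Phi(-\infty)$, if finite). Substituting $u=\langle Y\rangle_s$ and using the occupation-time formula for $B$ yields
\[
J_\infty(f)=\int_{\R}\frac{f(x)}{\varphi(x)\,b^2(x)}\,L^B_{H_\beta}\bigl(\Phi(x)\bigr)\,dx
\]
(with $|f|$ in place of $f$ when $f$ changes sign), $L^B$ being Brownian local time. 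The density $y\mapsto L^B_{H_\beta}(y)$ is a.s.\ continuous, compactly supported with an a.s.\ finite left end of its support, and it vanishes at the absorbing level $\beta$; hence, thanks to the local integrability of the integrand ensured by $(A)$ for the functions at hand, every bounded $x$-range contributes a.s.\ finitely, so the dichotomy is decided solely by the behaviour of $L^B_{H_\beta}(\Phi(x))$ as $x\to+\infty$, i.e.\ of $L^B_{H_\beta}$ near $\beta$.

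The last step --- the only genuinely delicate one --- is to turn ``$\int^{+\infty}\frac{|f(x)|}{\varphi(x)b^2(x)}L^B_{H_\beta}(\Phi(x))\,dx<\infty$ a.s.'' into a deterministic condition. Here I would invoke the first Ray--Knight theorem: $z\mapsto L^B_{H_\beta}(\beta-z)$ is, on $[0,\beta-\Phi(x_0)]$, a squared Bessel process of dimension $2$ started from $0$, a process whose order near $0$ is exactly $z$ (its mean being $2z$). An integral test of Jeulin's type for such a process then shows that the random integral is a.s.\ finite precisely when the integral with $L^B_{H_\beta}(\Phi(x))$ replaced by $\beta-\Phi(x)=\psi_+(x)\varphi(x)$ is finite, and is $+\infty$ a.s.\ otherwise; rewriting in the coefficients, this is the condition $I_1(f)<\infty$. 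The strict $0$--$1$ nature of the conclusion comes from Blumenthal's law for the squared Bessel process started at $0$. When both scale endpoints are finite I would first condition on $\{X_t\to+\infty\}$, which, by an $h$-transform, replaces $B$ by a Brownian motion conditioned to reach $\beta$ before $\Phi(-\infty)$ (of Bessel$(3)$ type near $\beta$), and then repeat the same analysis near $\beta$; by symmetry near $\Phi(-\infty)$ one gets the statements with $\psi_-$ and $I_2$. The main obstacle is thus this integral test on the squared-Bessel local-time field; the remainder is routine diffusion theory, and for the details I would follow the proof of Theorem~2.12 in \cite{miur}.
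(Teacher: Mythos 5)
This theorem is quoted, not proved, in the paper --- it is stated as a reformulation of Theorem~2.12 of \cite{miur} --- so there is no internal proof to compare yours with; the only check available is against the original source. Your sketch (natural scale $Y=\Phi(X)$, Dambis--Dubins--Schwarz time change, occupation-time formula giving $J_\infty(f)=\int_{\R}\frac{f(x)}{\varphi(x)b^2(x)}\,L^B_{H_\beta}(\Phi(x))\,dx$ on the event of convergence to $+\infty$, first Ray--Knight theorem at the attracting endpoint, and a Jeulin-type zero--one integral test) is essentially the route of \cite{miur} and is sound in outline; the genuinely delicate points are the ones you identify, namely the Jeulin lemma (applicable because $z\mapsto L^B_{H_\beta}(\beta-z)$ is a squared Bessel process of dimension $2$ started at $0$, so the ratios $L^B_{H_\beta}(\beta-z)/z$ have a fixed law), and, in the two-sided transient case, carrying the a.s.\ dichotomy onto the positive-probability event $\{X_t\to+\infty\}$, which works because that event coincides with $\{H_\beta<H_{\Phi(-\infty)}\}$ for the time-changed Brownian motion (this is a slightly more direct alternative to your $h$-transform step). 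One remark: your derivation yields the weight $|f(x)|\psi_\pm(x)/b^2(x)$ in $I_1$, $I_2$, which is the correct one and the one actually used for $I_1(d)$, $I_2(d)$ in Theorem~\ref{thdmvd}; the $a^2(x)$ in the denominators of the statement as printed should be read as $b^2(x)$.
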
 
%
\begin{theorem}\label{thdmvd}
(1) Let for some $\theta\neq 0$  $\Phi(+\infty)< +\infty $. Then  $$I_{1}(d)=\int\limits_0^{+\infty}\frac{d(x)}{
b^{2}(x)}\psi_+(x)dx
=+\infty .$$

%

(2) Let for some $\theta\neq 0$ $\Phi(-\infty)< \infty $.
Then  $I_{2}(d):=\int\limits_{-\infty}^0 \frac{d(x)}{
b^{2}(x)}\psi_-(x)dx
=\infty $.
\end{theorem}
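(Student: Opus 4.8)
The plan is to prove (1) in detail; part (2) then comes for free by applying (1) to the reflected process $-X$, since $x\mapsto -x$ sends a solution of \eqref{DiffEq} to another one, interchanges $\psi_+$ with $\psi_-$, and turns the hypothesis of (2) into that of (1). So I focus on (1). Write $c=a/b^2$, so $\varphi(x)=\exp\{-2\theta\int_0^x c\}$ and, since $d/b^2=(a/b^2)^2=c^2$, the quantity to be shown infinite is
\begin{equation*}
I_1(d)=\int_0^{\infty} c^2(x)\,\psi_+(x)\,dx .
\end{equation*}
Before the argument proper I would record the regularity we use: by $(A)$ and Cauchy--Schwarz, $\int_0^R|c|\le\big(\int_0^R d\big)^{1/2}\big(\int_0^R b^{-2}\big)^{1/2}<\infty$, so $c$ is locally integrable, hence $g(x):=\int_0^x c$ is continuous and $\varphi$ is continuous and strictly positive; and $\Phi(+\infty)<\infty$ says $\varphi\in L^1(\R_+)$, so $\Psi(x):=\int_x^{\infty}\varphi=\Phi(+\infty)-\Phi(x)$ is a strictly positive $C^1$ function, strictly decreasing from $\Phi(+\infty)$ to $0$, with $\Psi'=-\varphi$ and $\psi_+=\Psi/\varphi$.

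The argument itself will be a contradiction argument: suppose $I:=\int_0^{\infty}c^2\psi_+\,dx<\infty$. The first step is a Cauchy--Schwarz bound on the tail integral of $|c|$,
\begin{equation*}
\Big(\int_0^R|c|\,dx\Big)^2\le I\cdot\int_0^R\psi_+^{-1}(x)\,dx ,
\end{equation*}
together with the exact evaluation $\psi_+^{-1}=\varphi/\Psi=-(\log\Psi)'$, which gives $\int_0^R\psi_+^{-1}\,dx=\log\big(\Phi(+\infty)/\Psi(R)\big)=:h(R)$; here $h$ is increasing, $h(0)=0$, $h(R)\to+\infty$ as $\Psi(R)\downarrow0$, and $h'=\varphi/\Psi$. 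Consequently $|g(R)|\le\int_0^R|c|\le\sqrt{I\,h(R)}$, i.e.\ the assumed finiteness of $I_1(d)$ forces $g$ to grow no faster than $\sqrt h$.

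The second step would turn this into a self-defeating differential inequality for $h$. Combining $\Psi(R)=\Phi(+\infty)e^{-h(R)}$, $\varphi(R)=\Psi(R)h'(R)$ and $\varphi(R)=e^{-2\theta g(R)}$ and taking logarithms,
\begin{equation*}
\log h'(R)=h(R)-2\theta g(R)-\log\Phi(+\infty)\ge h(R)-2|\theta|\sqrt{I\,h(R)}-\log\Phi(+\infty).
\end{equation*}
Since $h(R)\to\infty$ and $\sqrt h=o(h)$, the right-hand side exceeds $\tfrac12 h(R)$ for all $R\ge R_0$, so $h'(R)\ge e^{h(R)/2}$ there; but then $\int_{R_0}^{\infty}1\,dR\le\int_{R_0}^{\infty}h'(R)e^{-h(R)/2}\,dR=2e^{-h(R_0)/2}<\infty$, which is absurd. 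Hence $I_1(d)=+\infty$.

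The only place I expect real work is the regularity layer of the first paragraph: pinning down that $(A)$ genuinely forces $c$ to be locally integrable and that $\varphi,\Psi,\psi_+$ are smooth and bounded away from $0$ and $\infty$ on compact sets, so that the Cauchy--Schwarz split and the two identities $\psi_+^{-1}=-(\log\Psi)'$ and $\varphi=\Psi h'$ are all justified. After that the proof is just the short chain of exact identities plus the one elementary ODE estimate above, and (2) needs nothing new beyond the reflection.
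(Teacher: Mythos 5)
Your proof is correct, but it follows a genuinely different route from the paper's. The paper argues directly: after normalizing $\theta=1$, it writes $I_1(d)=\int_0^\infty c^2(x)\int_x^\infty e^{-2\int_x^y c}\,dy\,dx$, swaps the order of integration by Fubini (nonnegative integrand), applies Cauchy--Schwarz in the inner variable to get the lower bound $\int_1^\infty\bigl(1-e^{-\int_1^y c(u)du}\bigr)^2\frac{dy}{y}$ (using the exact primitive $\frac{\partial}{\partial x}e^{-\int_x^y c}=c(x)e^{-\int_x^y c}$), and then expands the square: the term $\int_1^\infty\frac{dy}{y}$ diverges while the two remaining terms converge precisely because $\Phi(+\infty)<\infty$. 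You instead argue by contradiction: Cauchy--Schwarz against the weight $\psi_+$ converts the assumed finiteness of $I=\int_0^\infty c^2\psi_+$ into the growth bound $\lvert\int_0^R c\rvert\le\sqrt{I\,h(R)}$ with $h=\log\bigl(\Phi(+\infty)/\Psi\bigr)$, and the exact identities $\psi_+^{-1}=-(\log\Psi)'$, $\varphi=\Psi h'$, $\varphi=e^{-2\theta g}$ give $\log h'=h-2\theta g-\log\Phi(+\infty)$, hence $h'\ge e^{h/2}$ eventually, forcing $h$ (equivalently $\Psi\downarrow 0$) to blow up at a finite point --- impossible since $\Psi>0$ everywhere. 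Both proofs hinge on Cauchy--Schwarz plus $\varphi\in L^1(\R_+)$, but yours avoids the Fubini swap and the paper's restriction step from $\int_0^\infty(\int_0^y\cdots)^2\frac{dy}{y}$ to $\int_1^\infty(\int_1^y\cdots)^2\frac{dy}{y}$ (which, as written there, needs a small repair because the inner integrand may change sign), at the price of an indirect, ODE-comparison style argument; the paper's version is more computational and yields an explicit divergent minorant. Your regularity layer is indeed all that is needed and is easy: condition $(A)$ with Cauchy--Schwarz gives local integrability of $c$, so $g$, $\varphi$, $\Psi$, $h$ are continuous (and $\Psi,h$ are $C^1$ with the stated derivatives), and your reduction of (2) to (1) is legitimately just the change of variables $x\mapsto-x$ in the integrals ($\tilde\varphi(x)=\varphi(-x)$, $\tilde\Phi(+\infty)=-\Phi(-\infty)$, $\tilde\psi_+(x)=\psi_-(-x)$), matching the paper's ``proved similarly'' remark.
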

%
%
%
%
%
%
%
\begin{proof} We prove only the first statement since the second one can be proved similarly. Note that $\frac{\varphi(y)}{\varphi(x)}=\exp\Big\{-2\theta\int_x^y c(u)du\Big\}$ and $\frac{d(x)}{
b^{2}(x)}=\theta^2c^2(x)$. It means that without loss of generality, we can put $\theta=1$. Therefore, applying Fubini theorem for nonnegative integrands and Schwartz inequality, we get
\begin{equation}\begin{gathered}\label{bound}
I_1(d)=\int_0^{\infty}c^2(x)\int_x^{\infty}\exp\Big\{-2\int_x^y c(u)du\Big\}dy dx\\
=\int_0^{\infty}\int_0^y c^2(x)\exp\Big\{-2\int_x^y c(u)du\Big\}dx dy\\
\geq \int_0^{\infty}\Big(\int_0^y c (x)\exp\Big\{- \int_x^y c(u)du\Big\}dx\Big)^2 \frac{dy}{y}\\
\geq \int_1^{\infty}\Big(\int_1^y c (x)\exp\Big\{- \int_x^y c(u)du\Big\}dx\Big)^2 \frac{dy}{y}\\=
\int_1^{\infty}\Big(1-\exp\Big\{- \int_1^y c(u)du\Big)^2\frac{dy}{y}.
\end{gathered}
\end{equation}
It is sufficient to prove that the last integral in \eqref{bound} diverges. However, it consists of three terms, one of which, $\int_1^{\infty}\frac{dy}{y}$ diverges, and two other converge: $$\int_1^{\infty}\exp\Big\{- \int_1^y c(u)du\Big\}\frac{dy}{y}\leq (\Phi(\infty))^{\frac12}\Big(\int_0^{\infty}\frac{dy}{y^2}\Big)^{\frac12}<\infty$$
and $$\int_1^{\infty}\exp\Big\{- 2\int_1^y c(u)du\Big\}\frac{dy}{y}\leq \Phi(\infty)<\infty.$$
\end{proof}
\begin{corollary}\label{cor1}
As an immediate consequence of Theorems \ref{miur}, \ref{thdmvd}
and formula \eqref{Eqq},  we get the following statement: let
equation \eqref{DiffEq} have the weak solution, the coefficients
$a$ and $b$ satisfy the condition $(A)$ and $a$ be not identically
zero. Then $\int_0^{\infty}d(X_s)ds=+\infty$ $P$-a.s.
\end{corollary}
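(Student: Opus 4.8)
The plan is a case analysis on the true value of $\theta$ and on the finiteness of $\Phi(+\infty)$ and $\Phi(-\infty)$, invoking in each case either the recurrence argument that led to \eqref{Eqq} or the combination of Theorems~\ref{miur} and~\ref{thdmvd}. Two preliminary remarks are worth isolating. First, since $b$ never vanishes and $a\not\equiv 0$, the function $d=a^2/b^2$ is nonnegative, locally integrable (by $(A)$) and not identically zero. Second, $\varphi(0)=1$, so that $\psi_+(0)=\Phi(+\infty)$ and $\psi_-(0)=-\Phi(-\infty)$; moreover, because $\varphi$ is positive and locally integrable (note $(A)$ forces $c=a/b^2$ to be locally integrable, by Cauchy--Schwarz applied to $|a|/b=(a^2/b^2)^{1/2}\cdot\dots$), $\psi_\pm(x)$ is finite for one $x$ if and only if it is finite for every $x$, if and only if the corresponding limit $\Phi(\pm\infty)$ is finite.

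\emph{The recurrent case.} If $\theta=0$ then $\varphi\equiv 1$ and $\Phi(+\infty)=-\Phi(-\infty)=+\infty$; the same identity may also hold for certain $\theta\neq 0$. In either situation $X$ is recurrent, hence $L_\infty(x)=+\infty$ $P$-a.s.\ for every $x\in\R$, and the occupation time formula stated above together with $d\not\equiv 0$ gives $\int_0^\infty d(X_s)\,ds=\int_{\R}\frac{d(x)}{b^2(x)}L_\infty(x)\,dx=+\infty$ $P$-a.s., which is exactly \eqref{Eqq}.

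\emph{The transient case.} Suppose $\theta\neq 0$ and at least one of $\Phi(+\infty)$, $\Phi(-\infty)$ is finite. By the second preliminary remark, the three alternatives in Theorem~\ref{miur}, applied with $f=d$, correspond precisely to the configurations ``only $\Phi(+\infty)<+\infty$'', ``only $\Phi(-\infty)>-\infty$'', and ``both finite''. In the first configuration Theorem~\ref{thdmvd}(1) gives $I_1(d)=+\infty$, and the first bullet of Theorem~\ref{miur} yields $J_\infty(d)=+\infty$ $P$-a.s.; the second configuration is symmetric, using Theorem~\ref{thdmvd}(2) and the second bullet of Theorem~\ref{miur}. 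In the third configuration Theorems~\ref{thdmvd}(1) and~(2) give $I_1(d)=I_2(d)=+\infty$, so the third bullet of Theorem~\ref{miur} yields $J_\infty(d)=+\infty$ $P$-a.s.\ on $\{X_s\to+\infty\}$ and also on $\{X_s\to-\infty\}$; since a transient one-dimensional diffusion on $\R$ with scale function $\Phi$ converges $P$-a.s.\ either to $+\infty$ or to $-\infty$, these two events exhaust $\Omega$ up to a null set, and once again $\int_0^\infty d(X_s)\,ds=+\infty$ $P$-a.s.

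Together the two cases exhaust all possibilities, which completes the proof. I expect the only genuinely delicate point to be the bookkeeping in the transient case: translating the hypotheses on $\psi_\pm$ in Theorem~\ref{miur} into the $\Phi(\pm\infty)$-dichotomy, and, in the doubly transient configuration, observing that $\{X_s\to+\infty\}\cup\{X_s\to-\infty\}$ is a full-measure event so that the two conditional conclusions of Theorem~\ref{miur} combine into an unconditional one. Everything else is immediate substitution ($d/b^2=\theta^2c^2$, $\varphi(0)=1$) into results already established above.
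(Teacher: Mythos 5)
Your proposal is correct and follows exactly the deduction the paper intends: the recurrent case ($\Phi(+\infty)=-\Phi(-\infty)=+\infty$, including $\theta=0$) is handled by the local-time identity \eqref{Eqq}, and the transient configurations by combining Theorem~\ref{thdmvd} ($I_1(d)=\infty$ or $I_2(d)=\infty$) with the corresponding bullets of Theorem~\ref{miur}, noting that $\{X_s\to+\infty\}\cup\{X_s\to-\infty\}$ has full measure when both $\Phi(\pm\infty)$ are finite. The paper states the corollary as immediate without writing this out, so your write-up is simply the same argument made explicit.
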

The next theorem  is the main result in this section.

%
\begin{theorem}\label{thdmvd2}

Let equation \eqref{DiffEq} has the weak solution, coefficients
$a$ and $b$ satisfy condition $(A)$  and $a$ is not identically
zero. Then maximum likelihood estimator  $\hat{\theta}_{t}$ is
strongly consistent  as $t\rightarrow\infty $.
\end{theorem}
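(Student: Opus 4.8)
The plan is to combine the stochastic representation \eqref{LikEst} of $\hat\theta_t$ with the almost sure divergence of the denominator. Write
\[
\hat\theta_t-\theta=\frac{M_t}{\langle M\rangle_t},\qquad M_t:=\int_0^t\frac{a(X_s)}{b(X_s)}\,dW_s,
\]
and observe that $M$ is a continuous local martingale with quadratic variation $\langle M\rangle_t=\int_0^t d(X_s)\,ds$. By Corollary \ref{cor1}, under condition $(A)$ and the hypothesis that $a\not\equiv 0$, we have $\langle M\rangle_\infty=\int_0^\infty d(X_s)\,ds=+\infty$ $P$-a.s.

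Having secured $\langle M\rangle_\infty=+\infty$, the core of the argument is the strong law of large numbers for continuous local martingales: if $M$ is a continuous local martingale with $\langle M\rangle_t\to\infty$ a.s., then $M_t/\langle M\rangle_t\to 0$ a.s. I would cite this in the standard form (e.g.\ via the Dambis--Dubins--Schwarz time change, which realizes $M_t=B_{\langle M\rangle_t}$ for a Brownian motion $B$, followed by the ordinary SLLN $B_u/u\to 0$; or directly as the Lepingle/strong-law-for-martingales statement in Liptser--Shiryaev \cite{lish}). Applying this on the event $\{\langle M\rangle_\infty=+\infty\}$, which has probability one, yields $M_t/\langle M\rangle_t\to 0$ a.s., hence $\hat\theta_t\to\theta$ a.s.\ as $t\to\infty$, i.e.\ strong consistency.

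One technical point deserves a remark rather than real work: the stochastic integral $M_t$ is well defined for each finite $t$ precisely because $\int_0^t d(X_s)\,ds<\infty$ a.s., which was already noted in the preliminaries as a consequence of the local integrability of $d$ together with the occupation-times formula $\int_0^t f(X_s)\,ds=\int_{\mathbb R}\frac{f(x)}{b^2(x)}L_t(x)\,dx$ (local times are a.s.\ finite). So $M$ is a genuine continuous local martingale on $[0,\infty)$, and the only global input needed is the divergence of $\langle M\rangle_t$.

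The main (and essentially only) obstacle is therefore not in this theorem at all—it is Corollary \ref{cor1}, i.e.\ Theorem \ref{thdmvd} together with \eqref{Eqq}: establishing $\int_0^\infty d(X_s)\,ds=+\infty$ unconditionally (beyond condition $(A)$ and $a\not\equiv 0$) is where the real content lies, since in the transient case one must rule out $I_1(d)$ or $I_2(d)$ being finite, which is exactly what Theorem \ref{thdmvd} does via the Schwarz-inequality bound in \eqref{bound}. Given that lemma, the present theorem is a one-line consequence of the local-martingale strong law.
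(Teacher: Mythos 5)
Your proposal is correct and follows essentially the same route as the paper: write $\hat\theta_t-\theta=M_t/\langle M\rangle_t$ from \eqref{LikEst}, invoke Corollary \ref{cor1} to get $\langle M\rangle_t=\int_0^t d(X_s)\,ds\to\infty$ a.s., and conclude by the strong law of large numbers for (local) martingales, which the paper cites from Liptser--Shiryaev. Your added remarks (the DDS time-change justification of the martingale SLLN and the finiteness of $\int_0^t d(X_s)\,ds$ ensuring $M$ is well defined) are harmless elaborations of the same argument.
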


\begin{proof}
According to representation (\ref{LikEst}), $\hat{\theta}_{t}$ is
strongly consistent if for locally square-integrable martingale
$M_{t}=\int\limits_0^t \frac{a (X_{s})}{ b (X_{s})}dW_{s} $ we
have that $\frac{M_{t}}{\langle M\rangle_{t}}\rightarrow  0$ $P$-a.s. However,
according to the strong law of large numbers for martingales
(Theorem 10, \S 6, Chapter 2, \cite{liptser-shiryaev1}), under condition
$\langle M\rangle_{t}\rightarrow \infty, t\rightarrow \infty $ P-a.s., we have
that $\frac{M_{t}}{\langle M\rangle_{t}}\rightarrow  0$ P-a.s. The proof
immediately follows now from Corollary \ref{cor1}.
\end{proof}

\section{Discretization and strong consistency}

In this section we suppose that the coefficients $a$, $b$ and $c$ are
bounded and Lipschitz, more precisely, satisfy condition: for some $a_0>0$ and $K>0$ and for any $x,y\in \mathbb{R}$

$$(B)\;\;|a(x)|+|b(x)|+|c(x)|\leq a_0,\;\;
|a(x)-a(y)|+|b(x)-b(y)| \le K |x-y| .$$
Let $0<
\alpha<\frac{1}{2}$. Suppose that we observe the process $X$ that is the solution of
equation (\ref{DiffEq}), only at discrete moments of time
$t_{k}^{n}=\frac{k}{n}, 0\le k\le n^{1+\alpha}$.
Consider a discretized version of the estimate $\hat{\theta}_{t}$:
\begin{equation*}
\hat{\theta}_{n}^{d}=\frac{\sum\limits_{k=0}^{n^{1+\alpha}}c(X_{ \frac kn})\bigtriangleup
X_{k}^{n}}{\frac{1}{n}\sum\limits_{k=0}^{n^{1+\alpha}}d(X_{ \frac kn})},
\end{equation*}
where $\bigtriangleup X_{k}^{n}= X_{ \frac {k+1}n}- X_{
\frac kn}$. Then
\[
\hat{\theta}_{n}^{d}=\Bigg(\frac{1}{n}\sum\limits_{k=0}^{n^{1+\alpha}}d(X_{
\frac kn}) \Bigg)^{-1}\Bigg(\sum\limits_{k=0}^{n^{1+\alpha}}d(X_{
\frac kn}) \Bigg(\theta \int\limits_{ \frac kn}^{ \frac
{k+1}n}a(X_{s})ds+\int\limits_{ \frac kn}^{ \frac
{k+1}n}b(X_{s})dW_{s}\Bigg)\Bigg)=\]
\[
=\theta+\Bigg(\frac{1}{n}\sum\limits_{k=0}^{n^{1+\alpha}}d(X_{
\frac kn}) \Bigg)^{-1}\Bigg(\sum\limits_{k=0}^{n^{1+\alpha}}c(X_{
\frac kn}) \theta \int\limits_{ \frac kn}^{ \frac
{k+1}n}(a(X_{s})-a(X_{ \frac kn}))ds+\]
\[
+\sum\limits_{k=0}^{n^{1+\alpha}}\frac{a (X_{ \frac kn})}{ b (X_{
\frac kn})}\bigtriangleup W_{
k}^n+\sum\limits_{k=0}^{n^{1+\alpha}}c(X_{\frac kn})
\int\limits_{\frac kn}^{ \frac {k+1}n}(b(X_{s})-b(X_{ \frac
kn}))dW_{s}\Bigg)=:\theta +I_{1}^{n}+ I_{2}^{n}+ I_{3}^{n}.
\]

\begin{theorem}\label{thdmvd3}
Let equation \eqref{DiffEq} has the weak solution,  coefficients $a$ and $b$ satisfy condition $(B)$ and $a$ is not identically zero.
Then $\hat{\theta}_{n}^{d}$ is strongly consistent as $n \to
\infty$.

\end{theorem}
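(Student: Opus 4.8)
The plan is to show that each of the three error terms in the given decomposition $\hat\theta_n^d-\theta=I_1^n+I_2^n+I_3^n$ tends to $0$ $P$-a.s. Write $m_n=\lfloor n^{1+\alpha}\rfloor$, $T_n=(m_n+1)/n\sim n^\alpha\to\infty$, and $\eta_n(s)=\lfloor ns\rfloor/n$, so that $D_n:=\frac1n\sum_{k=0}^{m_n}d(X_{\frac kn})=\int_0^{T_n}d(X_{\eta_n(s)})\,ds$ and $\sum_{k=0}^{m_n}\tfrac{a(X_{\frac kn})}{b(X_{\frac kn})}\bigtriangleup W_k^n=\int_0^{T_n}\tfrac ab(X_{\eta_n(s)})\,dW_s$. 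Two facts are used throughout. First, condition $(B)$ implies condition $(A)$: since $b$ is continuous and never vanishes, $1/b^2$ is locally integrable, while $d=a\cdot c$ and $d/b^2=c^2$ are bounded by $a_0^2$; hence Corollary~\ref{cor1} gives $\int_0^\infty d(X_s)\,ds=\infty$ $P$-a.s., and the proof of Theorem~\ref{thdmvd2} gives $M_t/\langle M\rangle_t\to0$ $P$-a.s., where $M_t=\int_0^t\tfrac ab(X_s)\,dW_s$ and $\langle M\rangle_t=\int_0^t d(X_s)\,ds$. Second, since $a,b$ are bounded, the standard estimate $\mex{\sup_{0\le u\le h}|X_{t+u}-X_t|^q}\le C_q h^{q/2}$ holds for all $q\ge1$, $0\le h\le1$, uniformly in $t$; by Minkowski's inequality this yields, for every $p\ge1$,
\[
\mex{\Big(\int_0^{T_n}|X_s-X_{\eta_n(s)}|\,ds\Big)^p}\le C_p\, n^{p(\alpha-1/2)},\qquad \mex{\Big(\int_0^{T_n}|X_s-X_{\eta_n(s)}|^2\,ds\Big)^p}\le C_p\, n^{p(\alpha-1)}.
\]
Because $\alpha<\tfrac12$, taking $p$ large and using the Borel--Cantelli lemma shows that both $\int_0^{T_n}|X_s-X_{\eta_n(s)}|\,ds$ and $\int_0^{T_n}|X_s-X_{\eta_n(s)}|^2\,ds$ tend to $0$ $P$-a.s.

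Next I would show the denominator diverges. From $c=a/b^2$ one computes that under $(B)$ the function $d=a^2/b^2$ is Lipschitz: its a.e.\ derivative equals $2c\,a'-2(d/b)\,b'$, which is bounded because $|a/b|=|c||b|\le a_0^2$ forces $|d/b|=|c|\,|a/b|\le a_0^3$. Consequently $\bigl|D_n-\int_0^{T_n}d(X_s)\,ds\bigr|\le L_d\int_0^{T_n}|X_s-X_{\eta_n(s)}|\,ds\to0$ $P$-a.s., and since $\int_0^{T_n}d(X_s)\,ds\uparrow\infty$ we get $D_n\to\infty$ $P$-a.s. The terms $I_1^n$ and $I_3^n$ are then routine. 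For $I_1^n$, bounding $|c|\le a_0$ and $|a(X_s)-a(X_{\frac kn})|\le K|X_s-X_{\frac kn}|$ gives $|I_1^n|\le D_n^{-1}|\theta|a_0K\int_0^{T_n}|X_s-X_{\eta_n(s)}|\,ds\to0$. For $I_3^n$, the numerator $\sum_{k=0}^{m_n}c(X_{\frac kn})\int_{k/n}^{(k+1)/n}(b(X_s)-b(X_{\frac kn}))\,dW_s$ is, viewed as a process in its upper limit, a continuous martingale whose quadratic variation at time $T_n$ is at most $a_0^2K^2\int_0^{T_n}|X_s-X_{\eta_n(s)}|^2\,ds$; by Burkholder--Davis--Gundy, Markov's inequality and Borel--Cantelli (using the $n^{p(\alpha-1)}$ bound) this numerator tends to $0$ $P$-a.s., hence $I_3^n\to0$.

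The delicate term is $I_2^n=D_n^{-1}\int_0^{T_n}\tfrac ab(X_{\eta_n(s)})\,dW_s$. Here one cannot bound the numerator by a uniform moment estimate, because $\langle M\rangle_{T_n}$ may grow arbitrarily slowly, so the self-normalization must be preserved; the idea is to reduce to the already-proved continuous-time strong law for $M$. Write $\int_0^{T_n}\tfrac ab(X_{\eta_n(s)})\,dW_s=M_{T_n}+R_n$ with $R_n=\int_0^{T_n}\big(\tfrac ab(X_{\eta_n(s)})-\tfrac ab(X_s)\big)\,dW_s$. The analytic point is that $a/b$ is $\tfrac12$-Hölder on $\R$: indeed $(a/b)^2=d$ is non-negative and Lipschitz, the square root of a non-negative Lipschitz function is $\tfrac12$-Hölder, and $a/b$ changes sign only where it vanishes, so $a/b$ inherits $\tfrac12$-Hölder continuity. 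Then the quadratic variation of $R_n$ at time $T_n$ is at most $[a/b]_{1/2}^2\int_0^{T_n}|X_s-X_{\eta_n(s)}|\,ds$, and exactly as for $I_3^n$ (Burkholder--Davis--Gundy together with Borel--Cantelli, now with the $n^{p(\alpha-1/2)}$ bound, which is summable precisely because $\alpha<\tfrac12$) one gets $R_n\to0$ $P$-a.s. Finally, using $M_{T_n}/\langle M\rangle_{T_n}\to0$, $\langle M\rangle_{T_n}=\int_0^{T_n}d(X_s)\,ds=D_n+o(1)\to\infty$ and $R_n\to0$,
\[
I_2^n=\frac{M_{T_n}}{\langle M\rangle_{T_n}}\cdot\frac{\langle M\rangle_{T_n}}{D_n}+\frac{R_n}{D_n}\longrightarrow 0\quad P\text{-a.s.},
\]
and therefore $\hat\theta_n^d-\theta=I_1^n+I_2^n+I_3^n\to0$ $P$-a.s.

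The main obstacle, as indicated, is the treatment of $I_2^n$: the denominator $D_n$ need not grow at any prescribed rate, so the martingale numerator has to be controlled in a self-normalized way rather than through a uniform moment bound, which is exactly what forces the condition $\alpha<\tfrac12$. The one nontrivial analytic ingredient is the $\tfrac12$-Hölder regularity of $a/b$, which — somewhat unexpectedly, given that condition $(B)$ only asks for Lipschitz $a$ and $b$ and bounded $c$ — is deduced from the Lipschitz continuity of $d=(a/b)^2$.
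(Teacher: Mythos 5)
Your proof is correct, and although it starts from the same decomposition $\hat\theta_n^d=\theta+I_1^n+I_2^n+I_3^n$ (which the paper sets up before the theorem), the way you handle the individual pieces is genuinely different from the paper's. The paper treats $I_2^n$ by noting that the discrete martingale $N_n=\sum_k\frac{a}{b}(X_{k/n})\Delta W_k^n$ has quadratic characteristic exactly equal to the denominator $\frac1n\sum_k d(X_{k/n})$ and then invoking the strong law of large numbers for martingales; it treats $I_3^n$ via the a.s.\ convergence of $Y/(a+\langle Y\rangle)$ for square-integrable martingales; and it simply declares the divergence of the discrete denominator to be evident. You instead (i) prove that divergence by showing $d$ is Lipschitz under $(B)$ (using $|a/b|=|c||b|\le a_0^2$) and comparing the Riemann sum with $\int_0^{T_n}d(X_s)\,ds$ via moment bounds on $X_s-X_{\eta_n(s)}$, $\eta_n(s)=\lfloor ns\rfloor/n$, plus Borel--Cantelli; (ii) reduce $I_2^n$ to the single continuous-time martingale $M_t=\int_0^t\frac{a}{b}(X_s)\,dW_s$ through the split $M_{T_n}+R_n$, killing $R_n$ with Burkholder--Davis--Gundy, Borel--Cantelli and the $\tfrac12$-H\"older continuity of $a/b$ (correctly deduced from Lipschitzness of $d=(a/b)^2$ together with the intermediate-value/sign argument, which at worst doubles the H\"older constant); and (iii) dispose of $I_3^n$ by showing its numerator itself tends to zero a.s.\ via BDG and Borel--Cantelli. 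Your route buys rigor precisely where the paper is loosest: since the grid changes with $n$, the quantities $N_n$ and $P_n$ form triangular arrays rather than single martingale sequences, so the paper's direct appeal to the martingale SLLN for $N_n$ and to convergence of $P_n/(a+\langle P\rangle_n)$ is really a sketch, whereas your reduction to the genuine martingale $M$ together with explicit discretization estimates closes that gap, at the modest cost of the extra analytic lemmas on $d$ and $a/b$. What the paper's approach buys in return is brevity and a very transparent self-normalization, since the bracket of $N_n$ is literally the denominator of $I_2^n$.
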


\begin{proof}
It is sufficient to prove that $I_{r}^{n}\to 0$, $r=1,2,3$ a.s. as
$n\to \infty$. Evidently, the denominator $\Big(\frac{1}{n}\sum\limits_{k=0}^{n^{1+\alpha}}d(X_{
\frac kn}) \Big)^{-1}$ tends to infinity a.s. as $n\rightarrow \infty$. Consider the numerator, say  $J_1^n$, for $I_{1}^{n}$:

\begin{equation*}\begin{gathered}
|J_1^n|=\Big|\sum_{k=0}^{n^{1+\alpha}}c(X_{ \frac kn})\theta \int\limits_{ \frac kn}^{
\frac {k+1}n} (a(X_{s})-a(X_{ \frac kn}))ds\Big|\\ \le
{|\theta|a_{0}}n^{\alpha}\sup_{0\le k\le
n^{1+\alpha}, \frac kn\le s\le \frac {k+1}n}|a(X_{s})-a(X_{
\frac kn})|\\ \le
{K|\theta|a_{0}}n^{\alpha}\sup_{0\le k\le
n^{1+\alpha}, \frac kn\le s\le \frac {k+1}n}|X_{s}- X_{
\frac kn}|.
\end{gathered}\end{equation*}
In turn, $|X_{s}-X_{\frac kn}|\le \frac{a_{0}}{n}+|\int\limits_{
\frac kn}^{ s}b(X_{u})dW_{u}|$.
Therefore, $$J_{1}^{n}\le {K|\theta|a_{0}^{2}}
n^{\alpha-1}+{K|\theta|a_{0}}\xi_n,$$ where $\xi_n=
n^{\alpha}\sup_{0\le k\le n^{1+\alpha}, \frac kn\le s\le
\frac {k+1}n}|\int\limits_{ \frac kn}^{ s}b(X_{u})dW_{u}|$.
For any $\varepsilon >0$ denote $A_{n}=\{  \omega:
\xi_n\ge
\varepsilon\}$.
Then it follows from Burkholder-Gundy inequality that for any
$p>1$ $$P(A_{n})\le C_{p}\varepsilon ^{-p}n^{\alpha
p}\sum_{k=1}^{n}E(\int\limits_{ \frac kn}^{
\frac {k+1}n}|b(X_{u})|^{2}du)^{\frac{p}{2}}\le
C_{p}a_{0}^{p}\varepsilon^{-p}n^{\alpha p-\frac{p}{2}+1},$$ and
$\sum_{n=1}^{\infty}P(A_{n})<+\infty$ if we choose
$p>\frac{4}{1-2\alpha}$.
It means that for any $\omega \in \Omega$ there exists
$n(\omega)$ such that for $n>n(\omega)$
$$n^{\alpha}\sup_{0\le k\le n^{1+\alpha}, \frac kn\le s\le
\frac {k+1}n}|\int\limits_{ \frac kn}^{ s}b(X_{n})dW_{u}|\le
\varepsilon .$$ 
%
Therefore, $I_{1}^{n}\to 0$, $n\to\infty$ P-a.s.
Consider the term $I_{2}^{n}$. Denote martingale
$N_{n}:=\sum_{k=0}^{n^{1+\alpha}} \frac{a(X_{\frac kn})}{ b (X_{
\frac kn})}\bigtriangleup W_{ {k}}^{n}$. Then
$I_{2}^{n}=\frac{N_{n}}{\langle N\rangle_{n}}\to 0, t\to \infty$ a.s. since $\langle
N\rangle_{t}\to\infty$ a.s.

Consider the numerator for $I_{3}^{n}$. It is the
square-integrable martingale with respect to the discretized
filtration $\{\Im_{\frac kn}=\sigma\{X_{\frac in}$, $0\leq i\leq
k\}$, $0\leq k\leq n^{1+\alpha}\}.$  Denote it sa $P_{n}$. Its
quadratic characteristic equals
$$\langle P\rangle_{n}=\sum_{k=0}^{n^{1+\alpha}}{
c^2(X_{\frac kn})}\int\limits_{ \frac kn}^{
\frac {k+1}n}E((b(X_{s})-b(X_{ \frac kn}))^{2}/\Im_{\frac kn})ds,$$ and

\begin{equation*}\begin{gathered}\langle P\rangle_{n}\le {a_{0}^{2}}n^{\alpha}\sup_{0\le k\le
n^{1+\alpha}, \frac kn\le s\le
\frac {k+1}n}E((b(X_{s})-b(X_{ \frac kn}))^{2}/\Im_{\frac kn})\\\le
{2K^{2}a_{0}^{2}}n^{\alpha-2}+{2K^{2}a_{0}^{2}}n^{\alpha-1}\le
C{n}^{\alpha-1}\end{gathered}\end{equation*} with some constant $C>0$. Now we use the fact from \cite{liptser-shiryaev1} that for any locally square
integrable martingale $Y$ and for any constant $a>0$ $\frac{Y_t}{a+\langle Y\rangle_t}$ converges a.s. to some
finite random variable as $t\rightarrow \infty$.  Therefore, we can take some $a>0$ and conclude  that
$$\frac{P_{n}}{a+\langle P\rangle_{n}}{\longrightarrow} \xi\;\;\text{a.s.},$$ where $\xi$ is some random variable and consequently
\begin{equation*}\begin{gathered}\frac{P_{n}}{\frac 1n\sum_{k=1}^{n^{1+\alpha}}d(X_{
\frac kn})}=\frac{P_{n}}{a+\langle P\rangle_{n}}\cdot\frac{a+\langle P\rangle_{n}}{\frac 1n\sum_{k=1}^{n^{1+\alpha}}{ d(X_{ \frac kn})}}\to 0\end{gathered}\end{equation*} a.s. as $n\to
\infty$.
\end{proof}

\section{Some simulation results}

We simulated the model and set the  discretization interval
$\Delta t=0.01$; number of the simulated trajectories is $1 000$;
the value of the parameter to be  estimated equals $1$. Let us
consider three cases:
\begin{itemize}
\item[(i)] Let $ a(x)= 1+x$, $b(x)=x^{-1/3}$. Then for different $t$
we have the values of $\hat{\theta}_{t}$ as presented in the
Table~1.
\begin{flushleft}
\textit{Table~1}
\end{flushleft}

\begin{center}
\begin{tabular}{|c|c|c|c|c|c|c|}
  \hline
  $t$ & 1 & 10 & 20 & 30 \\
  \hline
 $\hat{\theta}_{t}$ & 0.870 & 0.999 & 1 - 5$\times 10^{-8}$ & 1 + $10^{-11}$ \\
  \hline
\end{tabular}
\end{center}

\item[(ii)] Let $a(x)= 1+x$, $b(x)=2+\sin x$. Then for different $t$
we have the values of $\hat{\theta}_{t}$ as presented in the
Table~2. \pagebreak

\begin{flushleft}
\textit{Table~2}
\end{flushleft}
\begin{center}
\begin{tabular}{|c|c|c|c|c|c|c|}
  \hline
  $t$ & 5 & 10 & 20 & 30 \\
  \hline
 $\hat{\theta}_{t}$ & 0.908 & 0.997 & 1 + 2$\times 10^{-7}$ & 1 + 6$\times10^{-11}$ \\
  \hline
\end{tabular}
\end{center}

\item[(iii)] Let $ a(x)=|x|1_{\{|x|\leq 1\}}$, $b(x)=1$. Then for
different $t$ we have the values of $\hat{\theta}_{t}$ as
presented in the Table~3.
\begin{flushleft}
\textit{Table~3}
\end{flushleft}
\begin{center}
\begin{tabular}{|c|c|c|c|c|c|c|c|}
  \hline
  $t$ & 5 & 10 & 50 & 100 & 500 & 1 000&10 000 \\
  \hline
 $\hat{\theta}_{t}$ & 1.962 & 1.67 & 1.35 & 1.31 & 1.19 & 1.1& 1.07 \\
  \hline
\end{tabular}
\end{center}
\end{itemize}
We see that in the last case, when the process is recurrent, the convergence is slow. It can be  explained in such a way: the drift coefficient ``often'' equals zero. When it is zero, we can not estimate the value of parameter. So, we must wait until sufficient quantity of information comes.
\bibliographystyle{elsarticle-num}

\end{document}